\numberwithin{equation}{section}
\newtheorem{theorem}{Theorem}[section]
\newtheorem{lemma}[theorem]{Lemma}
\newtheorem{assume}[theorem]{Assumption}
\newenvironment{proof}{\addvspace{\medskipamount}\par\noindent{\it Proof}.}
{\unskip\nobreak\hfill$\Box$\par\addvspace{\medskipamount}}
\newcommand{\ang}[1]{\langle  #1 \rangle }  
\newcommand{\p}{\mathbb{P}}
\newcommand{\E}[1]{\mathbb{E}\left[#1\right]}
\newcommand{\trieq}{\stackrel{\triangle}{=}}
\newcommand{\cF}{{\mathcal F}}
\newcommand{\cG}{{\mathcal G}}
\newcommand{\cS}{{\mathcal S}}
\newcommand{\R}{\mathbb{R}}
\begin{document}

\title{Mixed Deterministic and Random Optimal Control of Linear Stochastic Systems with Quadratic Costs}

\author{Ying Hu\thanks{IRMAR,
Universit\'e Rennes 1, Campus de Beaulieu, 35042 Rennes Cedex, France (ying.hu@univ-rennes1.fr)
and School of
Mathematical Sciences, Fudan University, Shanghai 200433, China.
Partially supported by Lebesgue center of mathematics ``Investissements d'avenir"
program - ANR-11-LABX-0020-01,  by ANR CAESARS - ANR-15-CE05-0024 and by ANR MFG - ANR-16-CE40-0015-01.} \and
Shanjian Tang\thanks{Department of Finance and Control Sciences, School of
Mathematical Sciences, Fudan University, Shanghai 200433, China (e-mail: sjtang@fudan.edu.cn).
Partially supported by National Science Foundation of China (Grant No. 11631004)
and Science and Technology Commission of Shanghai Municipality (Grant No. 14XD1400400).   }}

\maketitle

\begin{abstract}

In this paper, we consider the mixed optimal control of a linear stochastic system with a quadratic cost functional, with two controllers---one can choose only deterministic time functions, called the deterministic controller, while the other can choose adapted random processes, called the random controller. The optimal control is shown to exist under suitable assumptions. The  optimal control is characterized via a system of fully coupled forward-backward stochastic differential equations (FBSDEs) of mean-field type. We solve the FBSDEs via solutions of two (but decoupled) Riccati equations, and give the respective optimal feedback law for both deterministic and random controllers, using solutions of both Riccati equations. The optimal state satisfies a linear stochastic differential equation (SDE) of mean-field type. Both the singular and infinite time-horizonal cases are also addressed.
\end{abstract}

{\bf AMS subject classification.} 93E20

{\bf Keywords.} Stochastic LQ, differential/algebraic Riccati equation, mixed deterministic and random control, singular LQ, infinite-horizon

\section{Introduction and formulation of the problem}

Let $T>0$ be given and fixed. Denote by $\cS^n$ the totality of $n\times n$ symmetric matrices, and  by $\cS^n_+$ its subset of  all $n\times n$ nonnegative matrices. We mean by an $n\times n$ matrix $S\ge 0$ that $S\in \cS^n_+$ and by a matrix $S> 0$ that $S$ is positive definite. For a matrix-valued function $R: [0,T]\to \cS^n$, we mean by $R\gg 0$ that $R(t)$ is uniformly positive, i.e. there is a positive real number $\alpha$ such that $R(t)\ge \alpha I$ for any $t\in [0,T].$

In this paper, we consider the following linear control stochastic differential equation (SDE)
\begin{equation}\label{controlsystem}
dX_s=[A_sX_s+B_s^1u_s^1+B_s^2u_s^2]ds+ \sum_{j=1}^d[C_s^jX_s+D_s^{1j} u_s^1+D_s^{2j} u_s^2 ]dW_s^j, \quad s>0;\quad X_0=x_0,
\end{equation}
with the following quadratic cost functional
\begin{equation}\label{cost}
J(u)\trieq\frac{1}{2}\mathbb E\int_0^T\left[ \ang{Q_sX_s, X_s}+\ang{ R_s^1u_s^1, u_s^1}+\ang{ R_s^2u_s^2, u_s^2}\right]ds+\frac{1}{2}\mathbb E [\ang{G X_T, X_T}].
\end{equation}
Here, $(W_t)_{0 \le t \le T}=(W_t^1,\cdots,W_t^d)_{0 \le t \le T}$
is a $d$-dimensional Brownian motion on a probability
space $(\Omega, \cF, \p)$. Denote by $(\cF_t)$ the
augmented filtration generated by $(W_t)$.
$A, B^1,B^2,C^j,D^{1j}$ and $D^{2j}$ are all bounded Borel measurable functions from $[0,T]$
to $\R^{n\times n}, \R^{ n\times l_1}, \R^{ n\times l_2}, \R^{n\times n}, \R^{ n\times l_1}$, and $\R^{ n\times l_2}$,  respectively. $Q, R^1,$ and $ R^2$ are  nonnegative definite,  and they are all essentially bounded measurable functions on $[0,T]$ with values in ${\mathbb S}^n, {\mathbb S}^{l_1}$,
and ${\mathbb S}^{l_2}$,  respectively. In the first four sections, $R^1$ and $ R^2$ are further assumed to be positive definite.   $G\in \mathbb S^n$ is  positive semi-definite. For a
The process $u\in L^2_\cF(0, \, T; \, \R^l)$ is the control, and
$X\in L^2_\cF(\Omega; \, C(0, \, T; \, \R^n))$ is the corresponding state process  with initial value $x_0\in \R^n$.

We will use the following notation:
 $\mathbb S^l$: the set of symmetric $l \times l$ real matrices.
  $L^2_{\cG}(\Omega; \, \R^l)$ the set of random variables $\xi: (\Omega, \cG) \rightarrow (\R^l, {\cal B}(\R^l))$  with $\E{|\xi|^2}<+\infty$.
 $L^\infty_{\cG}(\Omega; \, \R^l)$ is the set of essentially bounded  random variables   $\xi: (\Omega, {\cG}) \to (\R^l, {\cal B}(\R^l))$.
 $L^2_\cF(t, \, T; \, \R^l)$ is the set of $\{\cF_s\}_{s\in [t,T]}$-adapted processes $f=\{f_s: t\leq s\leq T\}$ with $\E{ \int_t^T|f_s|^2\, ds} < \infty$, and denoted by $L^2(t, T; \R^l)$ if the underlying filtration is the trivial one.
 $L^\infty_\cF(t, \, T; \, \R^l)$: the set of essentially  bounded $\{\cF_s\}_{s\in [t,T]}$-adapted processes.
 $L^2_\cF(\Omega; \, C(t, \, T; \, \R^l))$: the set of continuous
         $\{\cF_t\}_{s\in [t,T]}$-adapted processes $f=\{f_s: t\leq s\leq T\}$ with $\E{ \sup_{s\in [t,T]}|f_s|^2\, } < \infty$.
We will often use vectors and matrices in this paper, where all vectors are column vectors. For a matrix $M$,
 $M'$ is its transpose, and $|M|=\sqrt{\sum_{i,j}m_{ij}^2}$ is the Frobenius norm.
Define
\begin{equation}
  B:=(B^1,B^2), \quad D:=(D^1, D^2),\quad  R:=\mbox{\rm diag} (R^1,R^2), \quad u:=((u^1)',(u^2)')';
\end{equation}
and for a matrix $K$ with suitable dimensions and $(t,x,u)\in [0,T]\times \mathbb R^n\times \mathbb R^l$,
\begin{eqnarray}
&& (C_tx+D_t u)dW_t=\sum_{j=1}^d (C_t^jx+D_t^{1j} u^1+D_t^{2j} u^2) dW_t^j; \quad C_t'K:=\sum_{j=1}^d (C_t^j)' K^j; \nonumber\\
&& D_tKD_t:=\sum_{j=1}^d (D_t^j)'KD_t^j, \quad C_t'KD_t:=\sum_{j=1}^d (C_t^j)'KD_t^j, \quad C_t'KC_t:=\sum_{j=1}^d (C_t^j)'KC_t^j.\nonumber   
\end{eqnarray}

If both $u^1$ and $u^2$ are adapted to the natural filtration of the underlying Brownian motion $W$ (i. e., $u^i\in U^i_{\rm ad}= \mathscr{L}^2_{\mathscr{F}}(0,T;\R^{l_i})$ for $i=1,2$),  it is well-known that the optimal control exists and can be synthesized into the following feedback of the state:
\begin{equation}\label{Optfeedback}
 u_t=(R_t+D_t'K_tD_t)^{-1}\left(K_tB_t+C_tK_tD_t\right)' X_t, \quad t\in [0,T].
\end{equation}
Here $K$ solves the following Riccati equation:
\begin{eqnarray}\label{Riccati}
  {d\over ds}K_s&=&A_s'K_s+K_sA_s+C_s'K_sC_s+Q_s\nonumber \\
  &&-(K_sB_s+C'_sK_sD_s)(R_s+D_s'K_sD_s)^{-1}(K_sB_s+C'_sK_sD_s)', \quad s\in [0,T];\\
   K_T&=&G.\nonumber 
\end{eqnarray}
See Wonham~\cite{Wonham}, Haussmann~\cite{Haussmann}, Bismut~\cite{Bismut1,Bismut2}, Peng~\cite{Peng}, and Tang~\cite{Tang1} for more details on the general Riccati equation arising from linear quadratic optimal stochastic control with both state- and control-dependent noises and deterministic coefficients.

In this paper, we consider the following situation: there are two controllers called the deterministic controller and the random controller:  the former can impose a deterministic action  $u^1$ only, i.e., $u^1\in U^1_{\rm ad}=L^2(0,T;\R^{l_1})$;  and the latter can impose a random action $u^2$, more precisely $u^2\in U^2_{\rm ad}=L^2_{\cF}(0,T;\R^{l_2})$. Firstly, we apply the conventional variational technique to characterize the optimal control  via a system of fully coupled forward-backward stochastic differential equations (FBSDEs) of mean-field type. Then we give solution of the FBSDEs with  two (but decoupled) Riccati equations, and derive the respective optimal feedback law for both deterministic and random controllers, using solutions of both Riccati equations. Existence and uniqueness is given to both Riccati equations.  The optimal state is shown to satisfy a linear stochastic differential equation (SDE) of mean-field type. Both the singular and infinite time-horizonal cases are also addressed.

The rest of the paper is organized as follows. In Section 2, we give the necessary and sufficient condition of the mixed optimal Controls via a system of FBSDEs. In Section 3, we synthesize the mixed optimal control into linear closed forms of the optimal state. We derive two (but decoupled) Riccati equations, and study their solvability. We state our main result. In Section 4, we address some particular cases. In Section 5, we discuss singular linear quadratic control cases. Finally in Section 6, we discuss the infinite time-horizonal case.

\section{Necessary and sufficient condition  of mixed optimal Controls}\label{formal-derivation}

Let $u^*$ be a fixed control and $X^*$ be the corresponding state process.
For any $t\in [0, T)$, define  the  processes
$(p(\cdot), (k^j(\cdot))_{j=1,\cdots, d})\in L^2_\cF(0,T;\R^n)\times (L^2_\cF(0,T;\R^n))^d$
as  the unique solution to
\begin{equation} \label{adjoint1general}
\left\{\begin{array}{rcl}
dp(s)&=&\displaystyle -[A_s'p(s)+C_s' k(s)+Q_sX^*_s]\, ds +k'(s)dW_s,\quad s\in[0,T];\\
p(T)&=& G X^*_T.
\end{array}\right.
\end{equation}

The following necessary and sufficient condition can be proved in a straightforward way.

\begin{theorem} Let $u^*$ be the optimal control, and $X^*$ be the corresponding solution.
Then there exists a pair of adjoint processes $(p,k)$ satisfying the BSDE~\eqref{adjoint1general}.
Moreover, the following optimality conditions hold true:
\begin{eqnarray}\label{1 opt}
\mathbb E[(B_s^1)'p(s)+(D_s^{1})'k(s)+R_s^1u^{1*}_s]&=&0,\\
\label{2 opt} (B_s^2)'p(s)+(D_s^{2})'k(s)+R_s^2u^{2*}_s&=&0;
\end{eqnarray}
and they are also sufficient for  $u^*$ to be optimal.
\end{theorem}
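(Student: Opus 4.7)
My plan is to follow the standard convex perturbation/duality argument, adjusting for the two different admissible sets. Existence and uniqueness of $(p,k)\in L^2_\cF(0,T;\R^n)\times (L^2_\cF(0,T;\R^n))^d$ solving the linear BSDE \eqref{adjoint1general} is routine: the driver is Lipschitz in $(p,k)$ with bounded coefficients, the inhomogeneous term $Q_sX^*_s$ lies in $L^2_\cF$, and $GX^*_T\in L^2_{\cF_T}(\Omega;\R^n)$.

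For necessity, given an admissible perturbation direction $v=(v^1,v^2)$ with $v^1\in L^2(0,T;\R^{l_1})$ and $v^2\in L^2_\cF(0,T;\R^{l_2})$, I would form $u^\epsilon:=u^*+\epsilon v$ and the variational state $Y$ satisfying
\begin{equation*}
dY_s=[A_sY_s+B_s^1v_s^1+B_s^2v_s^2]ds+\sum_{j=1}^d[C_s^jY_s+D_s^{1j}v_s^1+D_s^{2j}v_s^2]dW_s^j,\quad Y_0=0.
\end{equation*}
Linearity of the dynamics and the quadratic structure of \eqref{cost} give $J(u^\epsilon)-J(u^*)=\epsilon\,I_1(v)+\tfrac{\epsilon^2}{2}I_2(v)$, where $I_2(v)\ge 0$ (it has the same form as $J$ but with $x_0=0$) and
\begin{equation*}
I_1(v)=\mathbb E\!\left[\int_0^T\!\big(\ang{Q_sX^*_s,Y_s}+\ang{R^1_su^{1*}_s,v^1_s}+\ang{R^2_su^{2*}_s,v^2_s}\big)ds+\ang{GX^*_T,Y_T}\right].
\end{equation*}
Next I would apply It\^o's formula to $\ang{p(s),Y_s}$ on $[0,T]$ and take expectation; the $Q_sX^*_s$ and $GX^*_T$ contributions cancel (the adjoint was designed precisely so that they do), leaving
\begin{equation*}
I_1(v)=\mathbb E\!\int_0^T\!\big\{\ang{(B^1_s)'p+(D^1_s)'k+R^1_su^{1*}_s,v^1_s}+\ang{(B^2_s)'p+(D^2_s)'k+R^2_su^{2*}_s,v^2_s}\big\}ds.
\end{equation*}
Optimality of $u^*$ forces $I_1(v)=0$ for every admissible $v$: choosing $v^2\equiv 0$ with $v^1$ an arbitrary deterministic function and applying Fubini yields \eqref{1 opt}; choosing $v^1\equiv 0$ and using a standard density/localization argument in $L^2_\cF(0,T;\R^{l_2})$ yields \eqref{2 opt}.

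For sufficiency, the nonnegativity of $Q,R^1,R^2,G$ gives $I_2(v)\ge 0$ for every $v$. Setting $\epsilon=1$ in the decomposition above, $J(u^*+v)-J(u^*)=I_1(v)+\tfrac12 I_2(v)$; if $u^*$ satisfies \eqref{1 opt}--\eqref{2 opt} then the first summand of $I_1(v)$ vanishes because $v^1$ and $u^{1*}$ are deterministic (so the integrand's expectation pairs $v^1$ against the left side of \eqref{1 opt}), while the second summand vanishes by \eqref{2 opt}, whence $J(u^*+v)\ge J(u^*)$. The only non-mechanical point, and the main obstacle, is the asymmetry between \eqref{1 opt} and \eqref{2 opt}: because the deterministic controller may only probe directions $v^1$ that are deterministic, stationarity sees only the expectation of $(B^1)'p+(D^1)'k+R^1u^{1*}$, whereas the random controller probes all of $L^2_\cF(0,T;\R^{l_2})$ and localization produces the pointwise $dt\otimes d\mathbb P$-a.e. identity.
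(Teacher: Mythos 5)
Your proposal is correct and follows essentially the same route as the paper, which simply invokes the convex perturbation argument and states the resulting variational equality $\mathbb E\int_0^T\langle (B_s^i)'p(s)+(D_s^{i})'k(s)+R_s^iu^{i*}_s, u^i_s\rangle ds=0$ for all $u^i\in U^i_{\rm ad}$, $i=1,2$; you have merely written out the details (variational state, It\^o duality with $\langle p, Y\rangle$, Fubini for the deterministic direction versus pointwise localization for the adapted direction, and the convexity inequality for sufficiency), all of which are sound.
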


\begin{proof}
Using the convex perturbation, we obtain in a straightforward way the equivalent condition of the optimal control $u^*$:
\begin{eqnarray}
\mathbb E\int_0^T\langle (B_s^i)'p(s)+(D_s^{i})'k(s)+R_s^iu^{i*}_s, u^i_s\rangle ds&=&0, \quad \forall u^i\in U_{\rm ad}^i; \quad i=1,2.
\end{eqnarray}
The sufficient condition can be proved in a standard way.
\end{proof}

\section{Synthesis of the mixed optimal control}

\subsection{Ansatz}
Define
\begin{eqnarray}
\overline X:=\mathbb{E}[X], \quad \widetilde X:=X-\overline X; \quad \overline{ u^2}:=\mathbb{E}[u^2], \quad \widetilde {u^2}:=u^2-\overline {u^2}.
\end{eqnarray}
We expect a feedback of the following form
\begin{equation}\label{p eqn}
    p_s=P_1(s)\widetilde{X}_s+P_2(s)\bar{X}_s.
\end{equation}
Applying Ito's formula, we have
\begin{eqnarray}\label{dp}
dp&=&P_1'\widetilde{X}ds+P_1[A_s\widetilde{X}+B^2 \widetilde {u^{2*}}]ds+ P_1[C_sX_s+D_s u_s^* ]dW_s\\
& &+P_2'\bar{X}_sds+P_2[A_s\bar{X}_s+B_s^1u_s^{1*}+B_s^2\overline{u_s^{2*}}]\, ds. \nonumber
\end{eqnarray}

Hence
\begin{equation}\label{k eqn}
k^j(s)=P_1(s)(C_s^jX_s+D_s^{j} u^*_s).
\end{equation}
Define for $i=1,2,$
\begin{eqnarray}
\Lambda_i(S)&:=& R^i+(D^i)'SD^i, \quad S\in \mathbb{S}^n; \\
\widehat\Lambda(S)&:=& \Lambda_1(S)-(D^{1})'SD^{2}\Lambda_2^{-1}(S)(D^2)'SD^1, \quad S\in \mathbb{S}^n;
\end{eqnarray}
and
\begin{eqnarray}
\Theta_i:=(B^2)'P_i+(D^2)'P_1C.
\end{eqnarray}
Plugging equations~\eqref{p eqn} and ~\eqref{k eqn} into the optimality conditions~\eqref{1 opt} and \eqref{2 opt}:
\begin{eqnarray}\label{1eq}
(B_s^1)'P_2(s)\bar{X}_s+(D_s^1)'P_1(s)(C_s\overline X_s+D_s^{1} u^{1*}_s+D_s^{2} \overline{u_s^{2*}})+R_s^1u^{1*}_s&=&0,\\
(B_s^2)'[P_1(s)\widetilde{X}_s+P_2(s)\bar{X}_s]\quad\quad &&\nonumber \\
\label{2eq}+(D_s^{2})'P_1(s)[C_s(\widetilde X_s+\overline X_s)+D_s^{1} u^{1*}_s
+D_s^{2j} u_s^{2*}]+R_s^2u^{2*}_s&=&0;
\end{eqnarray}
From the last equality, we have
\begin{eqnarray}
u^{2*}=-\Lambda_2^{-1}(P_1)[\Theta_1\widetilde X+\Theta_2 \overline X+(D^2)'P_1D^1u^{1*}]
\end{eqnarray}
and consequently
\begin{eqnarray}
\overline {u^{2*}}=-\Lambda_2^{-1}(P_1)[\Theta_2 \overline X+D_2'P_1D_1u^{1*}].
\end{eqnarray}
In view of \eqref{1eq}, we have
\begin{eqnarray}
(B_s^1)'P_2(s)\bar{X}_s+(D_s^{1})'P_1(s)C_s\overline X_s&&\nonumber \\
+(D_s^{1})'P_1(s)D_s^{2} \overline{u_s^{2*}}+\Lambda_1(P_1(s))u^{1*}_s&=&0
\end{eqnarray}
and therefore,
\begin{eqnarray}
\Lambda_1(P_1(s))u^{1*}_s+(B_s^1)'P_2(s)\bar{X}_s+(D_s^{1})'P_1(s)C_s\overline X_s&&\nonumber \\
-(D_s^{1})'P_1(s)D_s^{2}\Lambda_2^{-1}(P_1)[\Theta_2(s) \overline X_s+(D_s^2)'P_1(s)D_s^1u^{1*}_s]&=&0
\end{eqnarray}
or equivalently
\begin{eqnarray}
[\Lambda^1(P_1)-(D^{1})'P_1D^{2}\Lambda_2^{-1}(P_1)(D^2)'P_1D^1] u^{1*}&&\nonumber \\
=-[(B^1)'P_2+(D^{1})'P_1C
-(D^{1})'P_1D^{2}\Lambda_2^{-1}(P_1)\Theta_2] \overline X_s.&&
\end{eqnarray}
We have
\begin{eqnarray}
u^1=M^1\overline X, \quad u^2=M^2\widetilde X+M^3\overline X
\end{eqnarray}
where
\begin{eqnarray}
M^1&:=&-[\Lambda_1(P_1)-(D^{1})'P_1D^{2}\Lambda_2^{-1}(P_1)(D^2)'P_1D^1]^{-1}\nonumber\\
&& \times [(B^1)'P_2+(D^{1})'P_1C
-(D^{1})'P_1D^{2}\Lambda_2^{-1}(P_1)\Theta_2],\\
M^2&:=&-\Lambda_2^{-1}(P_1)\Theta_1,\\
M^3&:=&-\Lambda_2^{-1}(P_1)[\Theta_2+(D^2)'P_1D^1M^1].
\end{eqnarray}
In view of \eqref{dp} and \eqref{adjoint1general}, we have
\begin{eqnarray}
dp&=&P_1'\widetilde{X}ds+P_1[A_s\widetilde{X}+B^2 M^2\widetilde {X}]ds+  k'_sdW_s\\
& &+P_2'\bar{X}_sds+P_2[A_s\bar{X}_s+B_s^1M^1\overline X_s+B_s^2M^3_s\overline X_s]\, ds\nonumber\\
&=&\displaystyle -\biggl\{A_s'(P_1(s)\widetilde{X}_s+P_2(s)\bar{X}_s)+(Q_s+C_s'P_1(s)C_s)(\overline X_s+\widetilde X_s)\\
&&+C_s' P_1(s)[D_s^1M^1_s\overline X_s+D_s^2 (M^2\widetilde {X}+M^3_s\overline X_s)]\biggr\}ds\nonumber \\
&&+  k'_sdW_s.\nonumber
\end{eqnarray}
We expect the following system for $(P_1, P_2)$:
\begin{eqnarray}\label{Riccati 1}
&&P_1'+P_1A+A'P_1+C'P_1C+Q\nonumber\\
&&-(P_1B^2+C' P_1D^2)\Lambda_2^{-1}(P_1)(P_1B^2+C' P_1D^2)'=0,\\
&&P_1(T)=G \nonumber
\end{eqnarray}
and
\begin{eqnarray}
P_2'+P_2A+A'P_2+C'P_1C+Q+C'P_1D^1M^1+C'P_1D^2M^3\nonumber\\
+P_2B^1M^1+P_2B^2M^3=0.
\end{eqnarray}
The last equation can be rewritten into the following one:
\begin{eqnarray}\label{Riccati 2}
P_2'+P_2\widetilde A(P_1)+{\widetilde A}'(P_1)P_2+\widetilde Q(P_1)-P_2\mathcal{N}(P_1)P_2=0,\quad  P_2(T)=G
\end{eqnarray}
where for $S\in \mathbb{S}^n_+$,
\begin{eqnarray}
U(S)&:=& S-SD^2\Lambda_2^{-1}(S)\left(D^2\right)'S; \nonumber\\
\widetilde Q(S)&:=&Q+C'U(S)C-C'U(S)D^1\widehat\Lambda^{-1}(S)(D^1)'U(S)C,\nonumber\\
\widetilde A(S)&:=&A-B^2\Lambda_2^{-1}(S)\left(D^2\right)'SC\nonumber\\
&&-\left[B^1-B^2\Lambda_2^{-1}(S)\left(D^2\right)'SD^1\right]\widehat \Lambda^{-1}(S)\left(D^1\right)'U(S)C,\nonumber\\
\mathcal{N}(S)&:=&B^2\Lambda_2^{-1}(S)\left(B^2\right)'\nonumber\\
&&+\left[B^1-B^2\Lambda_2^{-1}(S)\left(D^2\right)'SD^1\right]\widehat \Lambda^{-1}(S)\left[B^1-B^2\Lambda_2^{-1}(S)\left(D^2\right)'SD^1\right]'.\nonumber
\end{eqnarray}

We have the following representation for $M^1$ and $M^2$:
\begin{eqnarray}
M^1&=&-{\widehat \Lambda}^{-1}(P_1)\left[(B^1)'P_1+(D^1)'U(P_1)C-(D^1)'P_1D^1\Lambda_2^{-1}(P_1)(B^2)'P_2\right],\nonumber\\
M^3&=&-\Lambda_2^{-1}(P_1)\biggl\{(B^2)'P_2+(D^2)'P_1C\nonumber\\
&&-(D^2)'P_1D^1{\widehat \Lambda}^{-1}(P_1)\left[(B^1)'P_1+(D^1)'U(P_1)C-(D^1)'P_1D^1\Lambda_2^{-1}(P_1)(B^2)'P_2\right] \biggr\}.
\end{eqnarray}

\begin{lemma} For  $S\in \mathbb{S}^n_+$, we have $\widetilde Q(S)\ge 0$.
\end{lemma}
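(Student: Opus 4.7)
The plan is to prove $\widetilde{Q}(S) \ge 0$ by two successive Schur complement arguments, combined with the simple observation that $Q \ge 0$ makes it sufficient to verify nonnegativity of $C'[U(S)-U(S)D^1\widehat\Lambda^{-1}(S)(D^1)'U(S)]C$.

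First I would establish that $U(S)\ge 0$. The natural candidate is the bordered matrix
\begin{equation*}
\mathcal M_2(S):=\begin{pmatrix} S & SD^2 \\ (D^2)'S & \Lambda_2(S) \end{pmatrix},
\end{equation*}
which, by a direct expansion, satisfies
\begin{equation*}
\bigl(x',y'\bigr)\mathcal M_2(S)\binom{x}{y}=(x+D^2y)'S(x+D^2y)+y'R^2y\ge 0,
\end{equation*}
so $\mathcal M_2(S)\ge 0$. Since $\Lambda_2(S)\ge R^2>0$ is invertible (recall $R^2$ is positive definite in this section), the Schur complement of the $(2,2)$-block is exactly $U(S)$, and the standard Schur complement lemma gives $U(S)\ge 0$.

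Next, a short algebraic simplification yields the key identity
\begin{equation*}
\widehat\Lambda(S)-(D^1)'U(S)D^1=R^1,
\end{equation*}
so that $\widehat\Lambda(S)=R^1+(D^1)'U(S)D^1\ge R^1>0$; in particular $\widehat\Lambda(S)$ is invertible. Now consider the second bordered matrix
\begin{equation*}
\mathcal M_1(S):=\begin{pmatrix} U(S) & U(S)D^1 \\ (D^1)'U(S) & \widehat\Lambda(S) \end{pmatrix}.
\end{equation*}
Using $U(S)\ge 0$, write $U(S)=U(S)^{1/2}U(S)^{1/2}$ and decompose
\begin{equation*}
\mathcal M_1(S)=\binom{U(S)^{1/2}}{(D^1)'U(S)^{1/2}}\bigl(U(S)^{1/2}\ \ U(S)^{1/2}D^1\bigr)+\begin{pmatrix} 0 & 0 \\ 0 & R^1 \end{pmatrix},
\end{equation*}
which exhibits $\mathcal M_1(S)$ as a sum of two PSD matrices. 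Applying the Schur complement lemma to the $(2,2)$-block $\widehat\Lambda(S)>0$ of $\mathcal M_1(S)$ yields
\begin{equation*}
U(S)-U(S)D^1\widehat\Lambda^{-1}(S)(D^1)'U(S)\ge 0.
\end{equation*}
Sandwiching by $C'$ on the left and $C$ on the right and adding $Q\ge 0$ concludes $\widetilde Q(S)\ge 0$.

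The only non-routine step is discovering the right block matrices and checking the identity $\widehat\Lambda(S)-(D^1)'U(S)D^1=R^1$, which reduces the two nested inverses in the definition of $\widetilde Q$ to a single clean Schur complement; once that identity is in hand the rest is bookkeeping.
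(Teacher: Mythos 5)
Your proof is correct. It has the same skeleton as the paper's argument --- first establish $U(S)\ge 0$, then reduce the inner nested inverse via the identity $\widehat\Lambda(S)=R^1+(D^1)'U(S)D^1$, then show the remaining difference is dominated --- but the mechanism at each step is different: the paper factors out square roots ($\widehat{D^2}:=S^{1/2}D^2$, $\widehat{D^1}:=[U(S)]^{1/2}D^1$) and twice invokes the matrix inequality \eqref{matric inequ}, $D(R+D'FD)^{-1}D'\le F^{-1}$, whereas you obtain both positivity facts from the positive semidefiniteness of bordered block matrices together with the Schur complement lemma. The two devices are essentially interchangeable (the inequality \eqref{matric inequ} is itself a Schur-complement statement), so this is a variant rather than a structurally new proof; what your route buys is that the identity $\widehat\Lambda(S)=R^1+(D^1)'U(S)D^1$, which the paper only uses implicitly inside its chain of equalities for $\widetilde Q(S)$, is isolated and verified explicitly, and the first step needs no square root of $S$; the paper's version is marginally shorter since it quotes \eqref{matric inequ} as a known fact. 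One cosmetic remark, which applies equally to the paper's proof: with $d>1$ Brownian motions the notation $C'MC$ stands for $\sum_{j=1}^d (C^j)'MC^j$, so the final ``sandwich'' step is a sum of congruences of a PSD matrix, which of course is still PSD.
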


\begin{proof} First, we show that $U(S)\ge 0. $ In fact, we have (setting $\widehat {D^2}:=S^{1/2}D^2$)
\begin{eqnarray}
U(S)&=&S-S^{1/2}\widehat{D^2}\left[R^2+\left(\widehat{D^2}\right)'\widehat{D^2}\right]^{-1}\left(\widehat{D^2}\right)'S^{1/2}\\
&\ge&S-S^{1/2}IS^{1/2}=0.
\end{eqnarray}
Here we have used the following well-known matrix inequality:
\begin{eqnarray} \label{matric inequ}
D(R+D'FD)^{-1}D'\le F^{-1}
\end{eqnarray}
for $D\in \mathbb{R}^{n\times m}$, and positive matrices $F\in \mathbb{S}^n$ and $R\in \mathbb{S}^m$.

Using again the inequality~\eqref{matric inequ}, we have (setting $\widehat {D^1}:=[U(S)]^{1/2}D^1$)
\begin{eqnarray}
\widetilde Q(S)&=&Q+C'U(S)C\nonumber\\
&&-C'U(S)D^1\left[R^1+(D^1)'SD^1-(D^1)'SD^2\Lambda_2^{-1}(S)(D^2)'SD^1\right]^{-1}(D^1)'U(S)C\nonumber\\
&=&Q+C'U(S)C-C'U(S)D^1\left[R^1+(D^1)'U(S)D^1\right]^{-1}(D^1)'U(S)C\nonumber\\
&=&Q+C'U(S)C-C'[U(S)]^{1/2}\widehat {D^1}\left[R^1+\left(\widehat {D^1}\right)'\widehat {D^1}\right]^{-1}\left(\widehat {D^1}\right)'[U(S)]^{1/2}C\nonumber\\
&\ge& Q+C'U(S)C-C'[U(S)]^{1/2}I [U(S)]^{1/2}C\ge 0.
\end{eqnarray}
The proof is complete. \end{proof}

\subsection{Existence and uniqueness of optimal control}

\begin{theorem} \label{Main thm1} Assume that $R^1\gg 0$ and $R^2\gg 0$. Riccati equations~\eqref{Riccati 1} and ~\eqref{Riccati 2} have unique nonnegative solutions $P_1$ and $P_2$. The optimal control is unique and  has the following feedback form:
\begin{eqnarray}
u^{1*}=M^1\overline X, \quad u^{2*}=M^2\widetilde X^*+M^3\overline X^*=M^2X^*+(M^3-M^2)\overline X^*.
\end{eqnarray}
Define $\overline X_t^*:=\mathbb{E}[X_t]$ and $\widetilde X_t^*:=X_t^*-\overline X_t^*$
The optimal feedback system is given by
\begin{eqnarray}
X_t&=&x+\int_0^t[(A+B^2M^2)X_s+(B^1M^1-B^2M^2+B^2M^3)\overline X_s]\, ds\nonumber\\
&&+\int_0^t[(C+D^2M^2)X_s+(D^1M^1-D^2M^2+D^2M^3)\overline X_s]\, dW_s, \quad t\ge 0.
\end{eqnarray}
It is a mean-field stochastic differential equation.  The expected optimal state $\overline X_t^*$ is governed by the following ordinary differential equation:
\begin{eqnarray}
\overline X_t&=&x+\int_0^t(A+B^1M^1+B^2M^3)\overline X_s\, ds, \quad t\ge 0;
\end{eqnarray}
and $\widetilde X_t^*$ is governed by the following stochastic differential equation:
\begin{eqnarray}
\widetilde X_t&=&\int_0^t(A+B^2M^2)\widetilde X_s\, ds\nonumber\\
&&+\int_0^t[(C+D^2M^2)\widetilde X_s+(C+D^1M^1+D^2M^3)\overline X_s]\, dW_s, \quad t\ge 0.
\end{eqnarray}
The optimal value is given by
\begin{equation}\label{value}
J(u^*)=\langle P_2(0)X(0), X(0)\rangle.
\end{equation}
\end{theorem}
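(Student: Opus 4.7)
The proof decomposes into four stages: solvability of the two Riccati equations; construction of the closed-loop state; verification of optimality via the necessary-and-sufficient-condition theorem of Section~\ref{formal-derivation}; and computation of the value.

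\textbf{Stage 1 (Riccati equations).} Equation~\eqref{Riccati 1} has the exact form of the standard stochastic LQ Riccati equation~\eqref{Riccati} associated with the reduced problem in which only the random controller $u^2$ is present. Since $R^2 \gg 0$ and $Q, G \ge 0$ with all coefficients bounded and Borel measurable, a bounded nonnegative solution $P_1$ exists and is unique on $[0,T]$ by the classical results of Bismut, Peng and Tang cited in the introduction. Once $P_1$ is fixed, equation~\eqref{Riccati 2} becomes a deterministic symmetric matrix Riccati ODE for $P_2$: by the preceding lemma $\widetilde Q(P_1) \ge 0$, and $\mathcal N(P_1) \ge 0$ as the sum of two manifestly positive semidefinite matrices; all of $\widetilde A(P_1), \widetilde Q(P_1), \mathcal N(P_1)$ are bounded on $[0,T]$ because $R^1, R^2 \gg 0$ forces $\Lambda_2(P_1)$ and $\widehat\Lambda(P_1)$ (a Schur complement of $R + D'P_1 D$) to be uniformly positive. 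Global existence and uniqueness of a nonnegative solution $P_2 \in C([0,T]; \mathbb S^n_+)$ then follow from the classical theory of matrix Riccati ODEs.

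\textbf{Stage 2 (Closed-loop state).} With $P_1, P_2$ in hand, the gains $M^1, M^2, M^3$ of the Ansatz are bounded Borel measurable. The proposed closed-loop equation is a linear mean-field SDE with bounded coefficients; by standard theory it admits a unique strong solution $X^* \in L^2_\cF(\Omega; C(0,T;\R^n))$. Taking expectations yields the stated ODE for $\overline X^*$, and subtracting gives the stated SDE for $\widetilde X^*$. The pair $u^{1*} := M^1 \overline X^*$ (deterministic because $\overline X^*$ is) and $u^{2*} := M^2 \widetilde X^* + M^3 \overline X^*$ lies in $U^1_{\rm ad} \times U^2_{\rm ad}$.

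\textbf{Stage 3 (Optimality).} Set $p_s := P_1(s)\widetilde X^*_s + P_2(s)\overline X^*_s$ and $k^j_s := P_1(s)(C^j_s X^*_s + D^j_s u^*_s)$ as in~\eqref{p eqn}--\eqref{k eqn}. Apply It\^o's formula to $p_s$, insert the closed-loop dynamics, and separate the drift into its $\widetilde X^*$-component and its $\overline X^*$-component. The diffusion part collapses to $k'_s dW_s$ by construction. Using~\eqref{Riccati 1} and the definition of $M^2$, the $\widetilde X^*$-component of the drift equals the $\widetilde X^*$-part of $-[A'p + C'k + QX^*]$; using~\eqref{Riccati 2} and the definitions of $M^1, M^3$, the $\overline X^*$-component matches as well. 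Hence $(p,k)$ solves the adjoint BSDE~\eqref{adjoint1general}. The equations~\eqref{1eq}--\eqref{2eq} that define $M^1, M^2, M^3$ are, respectively, the expectation of~\eqref{1 opt} and the pointwise form~\eqref{2 opt} after substituting the Ansatz, so both optimality conditions hold. The theorem of Section~\ref{formal-derivation} gives optimality; strict convexity of $J$ (from $R^1, R^2 \gg 0$) yields uniqueness.

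\textbf{Stage 4 (Value) and main obstacle.} Apply It\^o's formula to $\langle P_1(s) \widetilde X^*_s, \widetilde X^*_s\rangle$ and $\langle P_2(s) \overline X^*_s, \overline X^*_s\rangle$, integrate over $[0,T]$ and take expectation; substituting the feedback and using~\eqref{Riccati 1}--\eqref{Riccati 2} collapses the quadratic forms into the running and terminal cost, and $\widetilde X^*_0 = 0$, $\overline X^*_0 = x_0$ give~\eqref{value}. The one genuinely technical step is the separation in Stage~3: after decomposing $X^* = \widetilde X^* + \overline X^*$, the drift of $p_s$ must split cleanly into a $\widetilde X^*$-part absorbed by~\eqref{Riccati 1} and an $\overline X^*$-part absorbed by~\eqref{Riccati 2}, with no residual cross-terms. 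Checking that the bookkeeping closes is precisely what forces the particular Schur-complement forms of $\widetilde A(P_1), \widetilde Q(P_1), \mathcal N(P_1)$ and the representations of $M^1, M^3$ recorded in the Ansatz, and this algebraic verification is the core content of the proof.
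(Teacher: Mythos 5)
Your proposal is correct, and for the existence of the optimal control, the feedback synthesis and the value formula it follows essentially the paper's own route: construct the closed-loop mean-field state, set $p=P_1\widetilde X^*+P_2\overline X^*$, $k=P_1(CX^*+Du^*)$, verify that this quadruple solves the adjoint BSDE~\eqref{adjoint1general} together with the optimality conditions~\eqref{1 opt}--\eqref{2 opt} (the algebra being exactly the Ansatz computations that produced \eqref{Riccati 1}--\eqref{Riccati 2} and $M^1,M^2,M^3$), and then invoke the sufficiency part of the theorem in Section~\ref{formal-derivation}; the value is obtained by an It\^o computation, where your splitting into $\langle P_1\widetilde X^*,\widetilde X^*\rangle$ and $\langle P_2\overline X^*,\overline X^*\rangle$ is equivalent to the paper's computation of $\langle p_T,X^*_T\rangle$. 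The genuine divergence is the uniqueness step. You deduce uniqueness of the optimal control from strict convexity of $J$ in $(u^1,u^2)$ (legitimate since $R^1,R^2\gg 0$ and $u\mapsto X$ is affine, and in fact the same argument the paper uses later in Theorem~\ref{Main thm2}), whereas the paper proves more: it takes an arbitrary quadruple $(X,u,p,k)$ satisfying the FBSDE optimality system, subtracts the Ansatz, and shows the difference $(\delta p,\delta k)$ solves a linear homogeneous mean-field BSDE, whose unique solution is zero by Buckdahn--Li--Peng; this yields uniqueness of solutions of the necessary-condition system itself, not merely of the minimizer, at the price of invoking mean-field BSDE theory. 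Your Stage 1 also makes explicit the solvability of the two Riccati equations (classical stochastic LQ theory for \eqref{Riccati 1} with $R^2\gg 0$; a deterministic Riccati ODE for \eqref{Riccati 2} using $\widetilde Q(P_1)\ge 0$ from the lemma, $\mathcal N(P_1)\ge 0$, and $\widehat\Lambda(P_1)=R^1+(D^1)'U(P_1)D^1\gg 0$), which the paper asserts but does not spell out; this is a welcome addition rather than a deviation.
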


\begin{proof} Define
\begin{eqnarray}
u^{1*}:=M^1\bar {X^*}, \quad u^{2*}:=M^2\widetilde {X^*}+M^3\bar {X^*}
\end{eqnarray}
 and
\begin{eqnarray}
p^*&=&P_1(s)\widetilde{X^*}+P_2\bar{X^*},\\
k^*&=&P_1[CX^*+D u^*].
\end{eqnarray}
We can check that $(X^*,u^*, p^*,k^*)$ is the solution to FBSDE, satisfying the optimality condition. Hence, $u^*$ is optimal.

If $(X,u, p,k)$ is alternative solution to FBSDE, satisfying the optimality condition, then setting:
$${\delta p}=p-(P_1\widetilde{X}+P_2\bar{X}),\quad {\delta k}=k-P_1(CX+D u).$$
Substituting
$$p={\delta p}+P_1\widetilde{X}+P_2\bar{X},\quad k={\delta k}+P_1(CX+D u)$$
into \eqref{1 opt} and \eqref{2 opt}, we have
\begin{eqnarray}
\label{u1} \mathbb E\left\{(B^1)'({\delta p}+P_1\widetilde{X}+P_2\bar{X})+(D^{1})'[{\delta k}+P_1(CX+D u)]+R^1u^{1}\right\}&=&0,\\
\label{u2} (B^2)'({\delta p}+P_1\widetilde{X}+P_2\bar{X})+(D^{2})'({\delta k}+P_1(CX+D u))+R^2u^{2}_s&=&0.
\end{eqnarray}
From the last equation, we have
\begin{eqnarray}
\overline {u^{2}}=-\Lambda_2^{-1}(P_1)[(B^2)'\overline {\delta p}+(D^2)'\overline {\delta k}+\Theta_2 \overline X+D_2'P_1D_1u^{1*}].
\end{eqnarray}
In view of \eqref{u1} and \eqref{u2}, we have
from the last equation,
\begin{eqnarray}
u^1=L^1\overline {\delta p}+L^2\overline {\delta k}+M^1\overline{X}
\end{eqnarray}
and
\begin{eqnarray}
u^2=L^3\delta p+L^4\delta k+L^5\overline {\delta p}+L^6\overline {\delta k}+M^2\widetilde {X}+M^3\overline X\nonumber
\end{eqnarray}
where
\begin{eqnarray}
L^1&:=&-{\widehat \Lambda}^{-1}(P_1)[(B^1)'-(D^1)'P_1D^2\Lambda_2^{-1}(P_1)(B^2)'],\nonumber\\
L^2&:=&-{\widehat \Lambda}^{-1}(P_1)[(D^1)'-(D^1)'P_1D^2\Lambda_2^{-1}(P_1)(D^2)'],\nonumber\\
L^3&:=&-\Lambda_2^{-1}(P_1)(B^2)',\nonumber\\
L^4&:=& -\Lambda_2^{-1}(P_1)(D^2)',\nonumber\\
L^5&:=&-\Lambda_2^{-1}(P_1)(D^2)'P_1D^1L^1,\nonumber\\
L^6&:=& -\Lambda_2^{-1}(P_1)(D^2)'P_1D^1L^2.\nonumber
\end{eqnarray}
Define the new function $f$ as follows:
\begin{eqnarray}
&&f(s, p, k, P, K)\nonumber\\
&=&[A'_s+P_1(s)B^2_sL^3_s+C'_sP_1(s)D^2_sL^3_s]p+[C'_s+P_1(s)B^2_sL^4+C'P_1(s)D^2_sL^4_s]k\nonumber \\
&&+[C'_sP_1(s)D^1_sL^1_s+P_2(s)B^1_sL^1_s+P_2(s)B^2_sL^3_s-P_1(s)B^2_sL^3_s+P_2(s)B^2_sL^5_s+C'_sP_1(s)D^2_sL^5_s]P\nonumber\\
&&+[C'_sP_1(s)D^1_sL^2_s+P_2(s)B^1_sL^2_s+P_2(s)B^2_sL^4_s-P_1(s)B^2_sL^4_s+P_2(s)B^2_sL^6_s+C'_sP_1(s)D^2_sL^6_s]K. \nonumber
\end{eqnarray}
Then $(\delta p, \delta k)$ satisfies the following linear homogeneous BSDE of mean-field type:
\begin{eqnarray}
d\delta p&=&-f(s, \delta p_s,\delta k_s, \overline{\delta p}_s, \overline{\delta k}_s)\, ds+\delta k dW, \quad \delta p(T)=0.
\end{eqnarray}
In view of Buckdahn, Li and Peng~\cite[Therem 3.1]{BLP}, it admits a unique solution $(\delta p,
\delta k)=(0,0)$.  Therefore, $X=X^*$ and $u=u*$.


The formula~\eqref{value} is derived from  computation of $\langle p_T, X^*_T\rangle$ with the It\^o's formula.
\end{proof}

\section{Particular cases}

\subsection{The classical optimal stochastic LQ case: $B^1=0$ and $D^1=0$.}

In this case, let $P_1$ is the unique nonnegative solution to Riccati equation~\eqref{Riccati 1}. Then, $P_1$ is also the solution of Riccati equation~\eqref{Riccati 2}, and the optimal control reduces to the conventional feedback form.

\subsection{The deterministic control of linear stochastic system with quadratic cost: $B^2=0$ and $D^2=0$.}

In this case, $B=B^1$ and $D=D^1$, and Riccati equation~\eqref{Riccati 1} takes the following form (we write $R=R^1$ for simplifying exposition):
$$P_1'+P_1A+A'P_1+C'P_1C+Q=0,\quad P_1(T)=G,$$
which is a linear Liapunov equation. Riccati equation~\eqref{Riccati 2} takes the following form:
\begin{eqnarray*}& &P_2'+P_2\widetilde A+{\widetilde A}'P_2+\widetilde Q
-P_2B'(R+D'P_1D)^{-1}BP_2=0, \quad P_2(T)=G
\end{eqnarray*}
with
$$
\widetilde  A:=A-B'(R+D'P_1D)^{-1}D'P_1C
$$
and
$$\widetilde Q:=Q+C'P_1C-C'P_1D(R+D'P_1D)^{-1}D'P_1C.
$$
The optimal control takes the following feedback form:
$$u=-(R+D'P_1D)^{-1}(BP_2+D'P_1C)\bar{X}.$$

\section{Some solvable singular cases}

In this section, we study the possibility of $R^1=0$ or $R^2=0$. We have

\begin{theorem} Assume that $R^1\gg 0$ and
\begin{equation}\label{Singular R2}
    R^2\ge 0, \quad (D^2)'D^2\gg 0, \quad G>0.
\end{equation}
Then Riccati equations~\eqref{Riccati 1} and ~\eqref{Riccati 2} have unique nonnegative solutions $P_1\gg 0$ and $P_2$, respectively. The optimal control is unique and  has the following feedback form:
\begin{eqnarray}
u^{1*}=M^1\overline X, \quad u^{2*}=M^2\widetilde X^*+M^3\overline X^*=M^2X^*+(M^3-M^2)\overline X^*.
\end{eqnarray}
The optimal feedback system and the optimal value take identical forms to those of Theorem~\ref{Main thm1}.
\end{theorem}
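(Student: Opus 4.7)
The plan is to reduce to Theorem~\ref{Main thm1} by first establishing a uniformly positive solution $P_1$ to Riccati~\eqref{Riccati 1}. Once $P_1\gg 0$ is available, the matrices $\Lambda_2(P_1)$ and $\widehat\Lambda(P_1)$ are uniformly invertible, so the coefficients of Riccati~\eqref{Riccati 2} are bounded and the feedback synthesis carries over essentially unchanged. The whole difficulty is concentrated in obtaining $P_1\gg 0$.

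For $P_1$ I would use monotone approximation. For $\epsilon\in(0,1]$ set $R^{2,\epsilon}:=R^2+\epsilon I\gg 0$ and let $P_1^\epsilon$ be the unique nonnegative solution of the corresponding regularized Riccati (the classical stochastic LQ Riccati~\eqref{Riccati} with $B^2,D^2,R^{2,\epsilon}$ in place of $B,D,R$, provided by the Wonham--Bismut--Peng--Tang theory cited in Section~1). Since $P_1^\epsilon(t)$ is the value-function coefficient of the pure $u^2$-LQ problem with control weight $R^{2,\epsilon}$, $\epsilon\mapsto P_1^\epsilon$ is pointwise nondecreasing, and testing with $u^2\equiv 0$ gives a uniform upper bound $P_1^\epsilon\le CI$. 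The critical step is a uniform lower bound $P_1^\epsilon\ge\alpha I$ with $\alpha>0$ independent of $\epsilon$. For any admissible $u^2$ with $X_t=x$, I would apply It\^o's formula to $|X_s|^2$ and absorb the cross terms $2\langle X,B^2u^2\rangle$ and $2\langle CX,D^2u^2\rangle$ into the strictly positive diffusion self-cost $\langle(D^2)'D^2u^2,u^2\rangle\ge\rho_0|u^2|^2$ by Young's inequality (possible precisely because $(D^2)'D^2\gg\rho_0 I$); this yields
\[
\mathbb E|X_s|^2\ge|x|^2-K\int_t^s\mathbb E|X_r|^2\,dr,
\]
with $K$ depending only on the $L^\infty$-bounds of $A,B^2,C,D^2$ and on $\rho_0$. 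Gronwall then gives $\mathbb E|X_T|^2\ge e^{-KT}|x|^2$ for \emph{every} admissible $u^2$, and since $G\ge\lambda_{\min}(G)I>0$ this upgrades to $\langle P_1^\epsilon(t)x,x\rangle\ge\lambda_{\min}(G)e^{-KT}|x|^2$, i.e.\ $P_1^\epsilon\ge\alpha I$ with $\alpha:=\lambda_{\min}(G)e^{-KT}$ uniformly in $\epsilon$ and $t$. The two-sided bound $\alpha I\le P_1^\epsilon\le CI$ forces $\Lambda_2(P_1^\epsilon)\ge\alpha\rho_0 I$, so the Riccati vector field is uniformly bounded and Lipschitz on $\{\alpha I\le P\le CI\}$; monotonicity and Arzel\`a--Ascoli then produce a continuous limit $P_1$ with $\alpha I\le P_1\le CI$ solving~\eqref{Riccati 1}, and uniqueness among $\gg 0$ solutions follows by Gronwall applied to the difference of any two such solutions.

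With $P_1\gg 0$ in hand the coefficients of~\eqref{Riccati 2} are all uniformly bounded: $\Lambda_2(P_1)\gg 0$, and by the matrix inequality~\eqref{matric inequ} already exploited in Lemma~3.1 one has $\widehat\Lambda(P_1)\ge R^1\gg 0$; moreover $\widetilde Q(P_1)\ge 0$ by that same lemma and $\mathcal N(P_1)\ge 0$ by inspection. Equation~\eqref{Riccati 2} is therefore a classical symmetric Riccati equation with bounded nonnegative data and terminal value $G\ge 0$, whose unique nonnegative solution $P_2$ follows from the same classical theory. The feedback formulas, the mean-field closed-loop SDE for $X^*$, and the value identity $J(u^*)=\langle P_2(0)X(0),X(0)\rangle$ then follow verbatim from the proof of Theorem~\ref{Main thm1}: verify that the ansatz $p^*=P_1\widetilde{X^*}+P_2\bar{X^*}$, $k^*=P_1(CX^*+Du^*)$ satisfies the adjoint equation~\eqref{adjoint1general} and the optimality conditions~\eqref{1 opt}--\eqref{2 opt}, and invoke the Buckdahn--Li--Peng mean-field BSDE uniqueness result to conclude uniqueness of the optimal control.

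The hardest part will be the uniform lower bound $P_1^\epsilon\ge\alpha I$: this is precisely where both hypotheses $G>0$ and $(D^2)'D^2\gg 0$ are essential, and without either of them $\Lambda_2(P_1)$ could degenerate in the limit, making~\eqref{Riccati 1} genuinely singular.
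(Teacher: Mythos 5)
Your proposal is correct, and it differs from the paper precisely at the one point where this theorem goes beyond Theorem~\ref{Main thm1}: the solvability of the now-singular Riccati equation~\eqref{Riccati 1} with $P_1\gg 0$. The paper disposes of this step by citation --- it invokes Kohlmann and Tang \cite[Theorem 3.13]{KT1}, whose hypotheses are exactly \eqref{Singular R2} --- and then observes, as you do, that $\widehat\Lambda(P_1)=R^1+(D^1)'U(P_1)D^1\ge R^1\gg 0$, so that \eqref{Riccati 2} is a classical Riccati equation with bounded data, $\widetilde Q(P_1)\ge 0$ and $\mathcal N(P_1)\ge 0$, uniquely solvable with $P_2\ge 0$; all remaining assertions are then carried over verbatim from the proof of Theorem~\ref{Main thm1} (verification of the FBSDE optimality conditions and the Buckdahn--Li--Peng uniqueness), exactly as in your last paragraph. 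You instead reprove the Kohlmann--Tang input from scratch: regularize $R^2$ by $R^2+\epsilon I$, use the value-function representation of $P_1^\epsilon$ for monotonicity and the upper bound, and obtain the uniform lower bound from the It\^o estimate showing that the drift of $|X_s|^2$ is bounded below by $-K|X_s|^2$ for \emph{every} admissible $u^2$, because the self-cost $|D^2u^2|^2\ge\rho_0|u^2|^2$ in the quadratic variation absorbs the cross terms; combined with $G>0$ this gives $P_1^\epsilon\ge\lambda_{\min}(G)e^{-KT}I$ uniformly in $\epsilon$, and the two-sided bound lets you pass to the limit in the equation. This is sound and correctly isolates where $(D^2)'D^2\gg 0$ and $G>0$ are used; what it buys over the paper is self-containedness and an explicit quantitative lower bound, at the cost of length. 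Two small points of care: (i) phrase the Gronwall step in differential form --- the one-sided integral inequality $\mathbb E|X_s|^2\ge|x|^2-K\int_t^s\mathbb E|X_r|^2\,dr$ does not by itself imply the exponential lower bound, whereas the drift estimate you actually derived gives $\frac{d}{ds}\mathbb E|X_s|^2\ge-K\mathbb E|X_s|^2$, i.e.\ monotonicity of $e^{Ks}\mathbb E|X_s|^2$, which does; (ii) your uniqueness argument for $P_1$ is within the class of uniformly positive solutions, which is also what the cited Kohlmann--Tang theorem delivers, so this matches the intended reading of the statement.
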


\begin{proof} In view of the conditions~\eqref{Singular R2}, the existence and uniqueness of solution $P_1\gg 0$ to Riccati equations~\eqref{Riccati 1}
can be found in Kohlmann and Tang~\cite[Theorem 3.13, page 1140]{KT1}, and those of solution $P_2\ge 0$ to Riccati equations~\eqref{Riccati 1}
comes from the fact that $\widehat \Lambda(P_1)\gg 0$ as a consequence of the condition that $R^1\gg 0$.

Other assertions can be proved in an identical manner as Theorem~\ref{Main thm1}.
\end{proof}

\begin{theorem} Assume that $R^2\gg 0$ and
\begin{equation}\label{Singular R1}
    R^1\ge 0, \quad (D^1)'D^1\gg 0, \quad \quad G>0.
\end{equation}
Then Riccati equations~\eqref{Riccati 1} and ~\eqref{Riccati 2} have unique nonnegative solutions $P_1\gg 0$ and $P_2$, respectively. The optimal control is unique and  has the following feedback form:
\begin{eqnarray}
u^{1*}=M^1\overline X, \quad u^{2*}=M^2\widetilde X^*+M^3\overline X^*=M^2X^*+(M^3-M^2)\overline X^*.
\end{eqnarray}
The optimal feedback system and the optimal value take identical forms to those of Theorem~\ref{Main thm1}. .
\end{theorem}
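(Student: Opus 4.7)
The plan is to mirror the proof of Theorem~\ref{Main thm1}, exploiting the symmetry between the two controllers: the earlier theorem handled a singular $R^2$ regularized by $(D^2)'D^2\gg 0$ and used $R^1\gg 0$ to obtain $\widehat\Lambda(P_1)\gg 0$ essentially for free; here the singularity is on the other side, and the nondegeneracy $(D^1)'D^1\gg 0$ must do the work that $R^1$ previously did. Accordingly, the main new obstacle is establishing the uniform positivity of $\widehat\Lambda(P_1)$ when $R^1$ is merely nonnegative.

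First I would solve Riccati equation~\eqref{Riccati 1} and establish $P_1\gg 0$. Because this equation depends only on the data $(A,C,Q,B^2,D^2,R^2,G)$ and $R^2\gg 0$, it is the classical nondegenerate stochastic LQ Riccati equation, whose unique nonnegative solution $P_1$ is furnished by the references recalled after \eqref{Riccati}. Combining $G>0$ with the value-function representation of $P_1$ yields a uniform lower bound $P_1(t)\ge \alpha I>0$ on $[0,T]$. I would then establish $\widehat\Lambda(P_1)\gg 0$ via the Sherman--Morrison--Woodbury identity
\[
U(S)\;=\;S-SD^2\Lambda_2^{-1}(S)(D^2)'S\;=\;\bigl(S^{-1}+D^2(R^2)^{-1}(D^2)'\bigr)^{-1},
\]
which is meaningful precisely because $R^2\gg 0$ and $S=P_1\gg 0$. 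Since $P_1^{-1}$ and $D^2(R^2)^{-1}(D^2)'$ are both uniformly bounded above on $[0,T]$, we deduce $U(P_1)\gg 0$, whence
\[
\widehat\Lambda(P_1)\;=\;R^1+(D^1)'U(P_1)D^1\;\ge\;(D^1)'U(P_1)D^1\;\gg\;0
\]
by the assumption $(D^1)'D^1\gg 0$.

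With $\widehat\Lambda(P_1)\gg 0$ secured, Riccati equation~\eqref{Riccati 2} has bounded coefficients, nonnegative inhomogeneous term $\widetilde Q(P_1)\ge 0$ (by the lemma), nonnegative quadratic coefficient $\mathcal N(P_1)\ge 0$, and nonnegative terminal value $G\ge 0$; it is thus a standard matrix Riccati differential equation, and classical theory delivers the unique nonnegative solution $P_2$. The remaining claims---the feedback form, the mean-field SDE for the optimal state, uniqueness of the optimal control, and the value formula \eqref{value}---then follow by repeating the verification argument of Theorem~\ref{Main thm1} verbatim: plug the ansatz $p^*=P_1\widetilde X^*+P_2\overline X^*$ and $k^*=P_1(CX^*+Du^*)$ into the adjoint FBSDE, and derive uniqueness from the linear homogeneous mean-field BSDE satisfied by $(\delta p,\delta k)$ via Buckdahn--Li--Peng. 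The whole argument hinges on the Woodbury step above, which is the only genuinely new ingredient needed to drop the assumption $R^1\gg 0$.
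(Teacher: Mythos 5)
Your proposal is correct and follows essentially the same route as the paper: the paper also obtains $P_1\gg 0$ from the classical theory together with $G>0$, and then applies the matrix inversion (Woodbury) identity to write $\widehat\Lambda(P_1)=R^1+(D^1)'\bigl[P_1^{-1}+D^2(R^2)^{-1}(D^2)'\bigr]^{-1}D^1$, concluding $\widehat\Lambda(P_1)\gg 0$ from $(D^1)'D^1\gg 0$, with all remaining assertions proved exactly as in Theorem~\ref{Main thm1}. Your write-up merely makes explicit the intermediate bound $U(P_1)\gg 0$, which the paper leaves implicit.
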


\begin{proof} The existence and uniqueness of solution $P_1$ to Riccati equations~\eqref{Riccati 1}
are well-known. In view of the condition $G>0$, we have $P_1\gg 0$. We now prove those of solution $P_2\ge 0$ to Riccati equations~\eqref{Riccati 1}.

In view of the well-known matrix inverse formula:
\begin{equation}\label{inverse formula}
    \left(A+BD^{-1}C\right)^{-1}=A^{-1}-A^{-1}B\left(D+CA^{-1}B\right)^{-1}CA^{-1}
\end{equation}
for $B\in \R^{n\times m}, C\in \R^{m\times n}$ and invertible matrices $A\in \R^{n\times n}, D\in \R^{m\times m}$ such that  $A+BD^{-1}C$ and $D+CA^{-1}B$ are invertible, we have
the following identity:
\begin{eqnarray}
\widehat{\Lambda}(P_1)&=&R^1+\left(D^1\right)'\left\{P_1-P_1D^2\left[R^2+\left(D^2\right)'P_1D^2\right]^{-1}(D^2)'P_1\right\}D^1\nonumber \\
&=&R^1+\left(D^1\right)'\left[P_1^{-1}+D^2\left(R^2\right)^{-1}\left(D^2\right)'\right]^{-1}D^1.
\end{eqnarray}
Noting the condition $\left(D^1\right)'D^1\gg 0$, we have $\widehat{\Lambda}(P_1)\gg 0$.

Other assertions can be proved in an identical manner as Theorem~\ref{Main thm1}.
\end{proof}

\section{The infinite time-horizontal case}

In this section, we consider the time-invariant situation of all the coefficients $A,B,C,D, Q$ and $R$ in the  linear control stochastic differential equation (SDE)
\begin{equation}\label{controlsystem invariant}
dX_s=[AX_s+B^1u_s^1+B^2u_s^2]ds+ [C_sX_s+D^{1} u_s^1+D^{2} u_s^2 ]dW_s, \quad t>0;\quad X_0=x_0,
\end{equation}
and the quadratic cost functional
\begin{equation}\label{cost invariant}
J(u)\trieq\frac{1}{2}\mathbb E\int_0^\infty\left[ \ang{QX_s, X_s}+\ang{ R^1u_s^1, u_s^1}+\ang{ R^2u_s^2, u_s^2}\right]ds.
\end{equation}

The admissible class of controls for the deterministic controller $u^1$ is $L^2(0,\infty; \mathbb{R}^{l_1})$ and for the random controller $u^2$ is $\mathcal{L}^2_{\mathcal{F}}(0,\infty; \mathbb R^{l_2})$. For simplicity of subsequent exposition, we assume that $Q>0$.

\begin{assume}\label{stable}  There is $K\in \mathbb R^{l_2\times n}$ such that the unique solution $X$ to the following linear matrix stochastic differential equation
\begin{equation}
dX_s=(A+B^2K)X_s\, ds+ (C+D^{2} K)X_sdW_s, \quad t>0;\quad X_0=I,
\end{equation}
lies in $\mathcal{L}^2_{\mathcal{F}}(0,\infty; \mathbb R^{n\times n})$. That is, our linear control system~\eqref{controlsystem invariant} is stabilizable using only control $u^2$.
\end{assume}

We have
\begin{lemma} Assume that $Q>0$ and Assumption~\ref{stable} is satisfied. Then, Algebraic Riccati equations
\begin{eqnarray}\label{Alg Riccati 1}
&&P_1A+A'P_1+C'P_1C+Q\nonumber\\
&&-(P_1B^2+C' P_1D^2)\Lambda_2^{-1}(P_1)(P_1B^2+C' P_1D^2)'=0\nonumber
\end{eqnarray}
and
\begin{eqnarray}\label{Alg Riccati 2}
P_2\widetilde A(P_1)+{\widetilde A}'(P_1)P_2+\widetilde Q(P_1)-P_2\mathcal{N}(P_1)P_2=0
\end{eqnarray}
have positive solutions $P_1$ and $P_2$.
Here for $S\in \mathbb{S}^n_+$,
\begin{eqnarray}
U(S)&:=& S-SD^2\Lambda_2^{-1}(S)\left(D^2\right)'S; \nonumber\\
\widetilde Q(S)&:=&Q+C'U(S)C-C'U(S)D^1\widehat\Lambda^{-1}(S)(D^1)'U(S)C,\nonumber\\
\widetilde A(S)&:=&A-B^2\Lambda_2^{-1}(S)\left(D^2\right)'SC\nonumber\\
&&-\left[B^1-B^2\Lambda_2^{-1}(S)\left(D^2\right)'SD^1\right]\widehat \Lambda^{-1}(S)\left(D^1\right)'U(S)C,\nonumber\\
\mathcal{N}(S)&:=&B^2\Lambda_2^{-1}(S)\left(B^2\right)'\nonumber\\
&&+\left[B^1-B^2\Lambda_2^{-1}(S)\left(D^2\right)'SD^1\right]\widehat \Lambda^{-1}(S)\left[B^1-B^2\Lambda_2^{-1}(S)\left(D^2\right)'SD^1\right]'.\nonumber
\end{eqnarray}
\end{lemma}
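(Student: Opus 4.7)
The plan is to construct $P_1$ and $P_2$ as monotone limits of the corresponding finite-horizon Riccati solutions as the horizon tends to infinity. For each $T>0$, let $(P_1^T, P_2^T)$ denote the unique nonnegative solutions on $[0,T]$ of the time-invariant versions of \eqref{Riccati 1}--\eqref{Riccati 2} with terminal condition $P_i^T(T)=0$; these exist by Theorem~\ref{Main thm1}. Time-invariance of the coefficients yields $P_i^T(t)=P_i^{T-t}(0)$, so the whole picture is encoded in the two matrix-valued sequences $\{P_i^T(0)\}_{T>0}$.

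First I would show that both sequences are monotone nondecreasing in $T$. By the value formula \eqref{value}, $\langle P_2^T(0)x_0, x_0\rangle$ equals the optimal cost of the mixed finite-horizon LQ problem; the same formula applied to the pure $u^2$-controlled problem (the special case $B^1=0$, $D^1=0$ of Section~4.1, whose Riccati equation coincides with \eqref{Riccati 1}) shows that $\langle P_1^T(0)x_0, x_0\rangle$ is the corresponding reduced optimal cost. In both cases, extending a shorter-horizon control by zero yields a feasible longer-horizon control whose nonnegative running cost dominates the shorter-horizon optimum, so $T\mapsto P_i^T(0)$ is nondecreasing. Assumption~\ref{stable} then supplies the uniform upper bound: the feedback $u^1\equiv 0$, $u^2=KX$ renders
\begin{equation}
\mathbb{E}\int_0^\infty \langle (Q+K'R^2K)X_s, X_s\rangle\, ds\le C|x_0|^2
\end{equation}
for some $C>0$ independent of $x_0$, and truncating this control to $[0,T]$ is feasible for both the pure-$u^2$ and the mixed finite-horizon problems, so $P_i^T(0)\le CI$ uniformly in $T$.

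Monotone convergence then produces $P_i:=\lim_{T\to\infty} P_i^T(0)\in\cS^n_+$. Time-invariance forces $P_i^T(\cdot)$ to converge uniformly on each compact subinterval of $[0,\infty)$ to the constant matrix $P_i$. Integrating \eqref{Riccati 1}--\eqref{Riccati 2} over a fixed interval $[t_1,t_2]$ and letting $T\to\infty$, using that $R^1\gg 0$, $R^2\gg 0$ keep $\Lambda_2(P_1^T)$ and $\widehat\Lambda(P_1^T)$ uniformly above a positive multiple of the identity (so the nonlinear Riccati right-hand sides are uniformly Lipschitz in $P$ along the sequence), I conclude that the algebraic right-hand sides vanish at $(P_1,P_2)$. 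Strict positivity is extracted from $Q>0$: path continuity gives $\mathbb{E}\int_0^{T_0}\langle QX_s,X_s\rangle\, ds\ge c|x_0|^2$ for some fixed $T_0, c>0$ uniformly over admissible controls, whence $P_i\ge P_i^{T_0}(0)\ge cI>0$.

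The main obstacle is the limit passage in the coupled nonlinear system, whose right-hand sides involve $\Lambda_2^{-1}(P_1^T)$ and $\widehat\Lambda^{-1}(P_1^T)$; the decisive fact is the uniform positive-definite lower bounds $\Lambda_2(P_1^T)\ge R^2\gg 0$ and $\widehat\Lambda(P_1^T)\ge R^1\gg 0$, which persist in the limit and convert monotone pointwise convergence of $P_i^T$ into convergence of the algebraic expressions. A secondary subtlety is that Assumption~\ref{stable} supplies stabilizability only through $u^2$, so a single feedback with $u^1\equiv 0$ must bound both $P_1^T(0)$ and $P_2^T(0)$ simultaneously; this is exactly why the cost bound above involves only $Q+K'R^2K$.
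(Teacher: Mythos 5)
Your construction is essentially the paper's own proof: approximate by the finite-horizon Riccati solutions with zero terminal data, get monotonicity in $T$ from the value-function representation, get a uniform upper bound from Assumption~\ref{stable} via the feedback $u^1\equiv 0$, $u^2=KX$, use time-invariance ($P_i^T(t)=P_i^{T-t}(0)$) to see the limit is a constant matrix, pass to the limit in the integral form of the Riccati equations using $\Lambda_2(P_1^T)\ge R^2$ and $\widehat\Lambda(P_1^T)=R^1+(D^1)'U(P_1^T)D^1\ge R^1$, and extract positivity from $Q>0$. The only structural difference is that the paper simply quotes Wu--Zhou for $P_1$ and runs this limiting argument for $P_2$ alone, whereas you treat $P_1$ by the same scheme through the pure-$u^2$ problem of Section~4.1; that is fine and slightly more self-contained.

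Two local points need repair, one of them a genuinely false intermediate claim. Your justification of strict positivity asserts that $\mathbb{E}\int_0^{T_0}\langle QX_s,X_s\rangle\,ds\ge c|x_0|^2$ \emph{uniformly over admissible controls}. That is not true in general: if $B$ or $D$ is nontrivial, large controls can drive the state to (near) zero in arbitrarily short time, making the $Q$-running cost alone as small as one likes; only the \emph{total} cost, including the $R^1,R^2$ control-energy terms, is bounded below. The standard fix: if $\langle P_i^{T_0}(0)x_0,x_0\rangle$ could be made arbitrarily small, then along a minimizing sequence $u\to 0$ in $L^2$ (since $R\gg 0$), hence $X$ converges to the uncontrolled trajectory from $x_0$, whose $Q$-cost is bounded below by $c|x_0|^2$ when $Q>0$ --- a contradiction; this recovers $P_i\ge P_i^{T_0}(0)\ge cI>0$, which is all the paper itself asserts (``in view of $Q>0$, we have $P_2^1(0)>0$''). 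Secondly, your monotonicity argument is phrased via zero-extension of shorter-horizon controls, which by itself only compares costs of extended controls; the clean statement is that the restriction of any horizon-$T_2$ control to $[0,T_1]$ is admissible for horizon $T_1$ and, since the running cost is nonnegative and the terminal weight is zero, has no larger cost, whence $P_i^{T_1}(0)\le P_i^{T_2}(0)$. This is cosmetic, but worth stating correctly.
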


\begin{proof} Existence and uniqueness of positive solution $P_1$ to Algebraic Riccati equation~\eqref{Alg Riccati 1} is well-known, and is referred to Wu and Zhou~\cite[Theorem 7.1, page 573]{WuZhou}.
Now we prove the existence of positive solution to Algebraic Riccati equation~\eqref{Alg Riccati 2}. We use approximation method by considering finite time-horizontal Riccati equations.

For any $T>0$, let $P_1^T$ and $P_2^T$ be unique solutions to Riccati equations~\eqref{Riccati 1} and~\eqref{Riccati 2}, with $G=0$. It is well-known that $P_1^T$ converges to the constant matrix $P_1$ as $T\to \infty$. We now show the convergence of $P_2^T$. Firstly, $P_2^T(t)$ is nondecreasing in $T$ for any $t\ge 0$ due to the following representation formula: for $(t,x)\in [0,T]\times \mathbb R^n,$
\begin{equation}\label{value t}
\langle P_2^T(t)x, x\rangle= \inf_{\substack{u^1\in L^2(t,T; \mathbb{R}^{l_1})\\
u^2\in \mathcal{L}^2_{\mathcal{F}}(t,T; \mathbb R^{l_2})}}
\frac{1}{2}\mathbb E^{t,x}\int_t^T\left[ \ang{QX_s, X_s}+\ang{ R^1u_s^1, u_s^1}+\ang{ R^2u_s^2, u_s^2}\right]ds,
\end{equation}
whose proof is identical to that of the formula~\eqref{value}. From~Assumption~\ref{stable}, it is straightforward to show that there is $C_t>0$ such that $|P^T_2(t)|\le C_t$. Then $P^T_2(t)$ converges to $P_2(t)$ as $T\to \infty$. Furthermore,  since all the coefficients are time-invariant and $(P_1^T(T), P_2^T(T))=0$ for any $T>0$, we have
\begin{equation}
\left(P_1^{T+s}(t+s), P_2^{T+s}(t+s)\right)=\left(P_1^T(t), P_2^T(t)\right).
\end{equation}
Taking the limit $T\to \infty$ yields that $P_2(t+s)=P_2(t)$. Therefore, $P_2$ is a constant matrix.

Taking the limit $T\to \infty$ in the integral form of Riccati equation~\eqref{Riccati 2}, we show that $P_2$ solves Algebraic Riccati equation~\eqref{Alg Riccati 2}.

Finally, in view of $Q>0$, we have $P_2^1(0)>0$. Hence $P_2\ge P_2^1(0)>0$.
\end{proof}

\begin{theorem}  \label{Main thm2} Let Assumption~\ref{stable} be satisfied. Assume that $Q>0$ and either of the following three sets of conditions holds true:

(i) $R^1> 0$ and $R^2> 0$;

(ii) $R^1> 0, R^2\ge 0, (D^2)'D^2> 0,$ and $G>0$; and

(iii) $R^1\ge 0, (D^1)'D^1> 0, R^2>  0,$ and $G>0$.

Then the optimal control is unique and  has the following feedback form:
\begin{eqnarray}
u^{1*}=M^1\overline X, \quad u^{2*}=M^2\widetilde X^*+M^3\overline X^*=M^2X^*+(M^3-M^2)\overline X^*.
\end{eqnarray}
Define $\overline X_t^*:=\mathbb{E}[X_t]$ and $\widetilde X_t^*:=X_t^*-\overline X_t^*$
The optimal feedback system is given by
\begin{eqnarray}
X_t&=&x+\int_0^t[(A+B^2M^2)X_s+(B^1M^1-B^2M^2+B^2M^3)\overline X_s]\, ds\nonumber\\
&&+\int_0^t[(C+D^2M^2)X_s+(D^1M^1-D^2M^2+D^2M^3)\overline X_s]\, dW_s, \quad t\ge 0.
\end{eqnarray}
It is a mean-field stochastic differential equation.  The expected optimal state $\overline X_t^*$ is governed by the following ordinary differential equation:
\begin{eqnarray}
\overline X_t&=&x+\int_0^t(A+B^1M^1+B^2M^3)\overline X_s\, ds, \quad t\ge 0;
\end{eqnarray}
and $\widetilde X_t^*$ is governed by the following stochastic differential equation:
\begin{eqnarray}
\widetilde X_t&=&\int_0^t(A+B^2M^2)\widetilde X_s\, ds\nonumber\\
&&+\int_0^t[(C+D^2M^2)\widetilde X_s+(C+D^1M^1+D^2M^3)\overline X_s]\, dW_s, \quad t\ge 0.
\end{eqnarray}
The optimal value is given by
\begin{equation}\label{value 2}
J(u^*)=\langle P_2X(0), X(0)\rangle.
\end{equation}
\end{theorem}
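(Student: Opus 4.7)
The plan is to adapt the completion-of-squares argument from the proof of Theorem~\ref{Main thm1} to the time-invariant infinite-horizon setting, with the finite-horizon Riccati matrices replaced by the constant positive solutions $P_1,P_2$ produced by the preceding lemma. Under each of the three hypothesis sets (i)--(iii), the matrices $\Lambda_2(P_1)$ and $\widehat\Lambda(P_1)$ are positive definite by exactly the arguments used for the singular cases of Section~5 (in particular via the identity~\eqref{inverse formula} under~(iii)), so the constant feedback gains $M^1,M^2,M^3$ are well-defined, and the candidate control $u^{1*}=M^1\overline X^*$, $u^{2*}=M^2\widetilde X^*+M^3\overline X^*$ is the natural time-invariant analogue of the feedback in Theorem~\ref{Main thm1}.

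The next step is to verify that this candidate is admissible, which reduces to showing that the closed-loop trajectories satisfy $\overline X^*\in L^2(0,\infty;\R^n)$ and $\widetilde X^*\in \mathcal{L}^2_{\mathcal F}(0,\infty;\R^n)$. I would do this through a Lyapunov analysis: differentiating $\langle P_2\overline X^*_t,\overline X^*_t\rangle$ along the deterministic feedback ODE and using algebraic Riccati equation~\eqref{Alg Riccati 2}, the drift collapses to $-\langle \Sigma_1 \overline X^*_t,\overline X^*_t\rangle$ for some positive definite $\Sigma_1$ (positive because $Q>0$ forces $\widetilde Q(P_1)>0$ via the lemma preceding Theorem~\ref{Main thm1}), which yields exponential decay of $\overline X^*$. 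An analogous It\^o computation applied to $\mathbb E\langle P_1\widetilde X^*_t,\widetilde X^*_t\rangle$ via~\eqref{Alg Riccati 1}, treating the $\overline X^*$-dependent noise coefficient as a forcing term that is already $L^2$ in time, delivers the corresponding dissipation estimate for $\widetilde X^*$.

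Once admissibility is secured, the value identity $J(u^*)=\langle P_2 x_0,x_0\rangle$ drops out of the same It\^o identity integrated on $[0,\infty)$. For optimality I would argue by completion of squares: for an arbitrary admissible $(u^1,u^2)$ with $J(u^1,u^2)<\infty$, apply It\^o's formula to $\mathbb E\langle P_1\widetilde X_t,\widetilde X_t\rangle+\langle P_2\overline X_t,\overline X_t\rangle$ on $[0,T]$ and use the two algebraic Riccati equations to absorb all cross terms into a perfect square, yielding
\begin{equation*}
J_T(u)=\langle P_2 x_0,x_0\rangle-\tfrac12\mathbb E\langle P_1\widetilde X_T,\widetilde X_T\rangle-\tfrac12\langle P_2\overline X_T,\overline X_T\rangle+\tfrac12\mathbb E\int_0^T\Xi_s\,ds,
\end{equation*}
where $\Xi_s\ge 0$ is a quadratic form in $(u^1_s-M^1\overline X_s,\;\widetilde{u^2}_s-M^2\widetilde X_s,\;\overline{u^2}_s-M^3\overline X_s)$. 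Finiteness of $J(u)$ together with $Q>0$ forces $\mathbb E|X_t|^2\in L^1(0,\infty)$, so the boundary terms vanish along a sequence $T_n\to\infty$, giving $J(u)\ge\langle P_2 x_0,x_0\rangle$ with equality iff $\Xi_s\equiv 0$ a.e., which identifies $u=u^*$ uniquely and simultaneously establishes~\eqref{value 2}.

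The most delicate step, and the one I would spend the most care on, is the admissibility/stability analysis in paragraph two: producing uniform coercivity of the Lyapunov dissipation under each of the three parameter regimes, and justifying that $\mathbb E\langle P_i X_{T_n},X_{T_n}\rangle\to 0$ along some $T_n\to\infty$ for every admissible control of finite cost. The hypothesis $Q>0$ is essential here, since it propagates to $\widetilde Q(P_1)>0$ and supplies the necessary coercivity, while Assumption~\ref{stable} is used precisely to guarantee that the admissible class is nonempty and to justify the finite-horizon approximation underlying the existence lemma.
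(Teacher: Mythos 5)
Your proposal is correct in outline, but it follows a genuinely different route from the paper. The paper never redoes a completion of squares in the infinite-horizon setting: it gets the lower bound $J(u)\ge\langle P_2^T(0)x,x\rangle$ for every admissible $u$ simply by truncating the (nonnegative) cost and invoking the finite-horizon Theorem~\ref{Main thm1} with $G=0$, then lets $T\to\infty$ using the monotone convergence $P_2^T\to P_2$; and it obtains admissibility of the candidate $u^*$ and the value formula by passing to the limit, first in $T$ and then in $s$, in the finite-horizon duality identity \eqref{dual s} $\mathbb{E}\langle P_1^T(s)\widetilde X_s^{*,T}+P_2^T(s)\overline X_s^{*,T},X_s^{*,T}\rangle+J^s(u^{*,T})=\langle P_2^T(0)x,x\rangle$, relying on convergence of the finite-horizon optimal pairs $(u^{*,T},X^{*,T})$ on compact time intervals and on nonnegativity of the bracket; uniqueness of the optimal control comes from strict convexity, and the argument also yields uniqueness of the positive ARE solution as a by-product. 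You instead argue by direct verification: well-posedness of the constant gains under (i)--(iii) as in Section~5, a Lyapunov dissipation estimate from the two algebraic Riccati equations (using $\widetilde Q(P_1)\ge Q>0$, and the fact that the mean/fluctuation cross terms in the It\^o correction vanish since $\mathbb{E}\widetilde X=0$) to prove admissibility of the closed loop, and a single completion-of-squares identity on $[0,T]$ whose boundary terms are killed along a subsequence $T_n\to\infty$ because $Q>0$ forces $\mathbb{E}|X_t|^2\in L^1(0,\infty)$ for any finite-cost control; this gives optimality, the value \eqref{value 2}, and uniqueness (via $\Xi\equiv 0$ plus uniqueness of the closed-loop mean-field SDE) in one stroke. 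What each buys: your route is more self-contained --- it avoids the paper's unproved ``stability of solutions'' limit $(u^{*,T},X^{*,T})\to(u^*,X^*)$ and does not need the finite-horizon optimal pairs at all, only the constant $P_1,P_2$ from the lemma --- at the price of carrying out explicitly the closed-loop Riccati algebra and the square-completion in the mean-field decomposition, which the paper sidesteps by recycling Theorem~\ref{Main thm1}; the paper's route is shorter given the finite-horizon results, and additionally delivers uniqueness of the positive solution of \eqref{Alg Riccati 2}, which your argument does not address (though it is not part of the statement).
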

\begin{proof} The uniqueness of the optimal control is an immediate consequence of the strict convexity of the cost functional in both control variables $u^1$ and $u^2$. We now show that $u^*$ is optimal.

For any admissible pair $(u^1,u^2)$, from Theorem~\ref{Main thm1}, we have
$$
J^T(u)\ge \langle P_2^T(0)x,x\rangle.
$$
Therefore, letting $T\to \infty$, we have $J(u)\ge \langle P_2(0)x,x\rangle$.

For $0\le s\le T<\infty$, let $(u^{*,T}, X^{*,T})$ be the optimal pair corresponding to the time-horizon $T>0$, and the associated adjoint process is denoted by $p^T$.
Using It\^o's formula to compute the inner product $\langle p^T, X^{*,n}\rangle$, noting that $p^T_s=P_1^T(s)\widetilde X_s^{*,T}+P_2^T(s)\overline X_s^{*,T}$, we have
\begin{equation}\label{dual s}
\mathbb{E}\left[\langle P_1^T(s)\widetilde X_s^{*,T}+P_2^T(s)\overline X_s^{*,T}, X^{*,T}_s\rangle\right]+ J^s(u^{*,T})=\langle P_2^T(0)x, x\rangle.
\end{equation}
From stability of solutions of stochastic differential equations, we have for any $s>0$,
$$
\lim_{T\to \infty}\mathbb{E}\max_{0\le t\le s}|X_t^{*,T}-X_t^*|^2=0, \quad \lim_{T\to \infty} \mathbb{E}\int_0^s|u_t^{*,T}-u_t^*|^2=0.
$$
Passing to the limit $T\to \infty$ in \eqref{dual s}, we have for any $s\ge 0$
\begin{equation}\label{dual formula}
\mathbb{E}\left[\langle P_1\widetilde X_s^{*}+P_2\overline X_s^{*}, X^{*}_s\rangle\right] +J^s(u^*)=\langle P_2x, x\rangle.
\end{equation}
Since
$$
\mathbb{E}\left[\langle P_1\widetilde X_s^{*}+P_2\overline X_s^{*}, X^{*}_s\rangle\right] =\mathbb{E}\left[\langle P_1\widetilde X_s^{*},\widetilde  X^{*}_s\rangle\right]
+\mathbb{E}\left[\langle P_2\overline X_s^{*}, \overline X^{*}_s\rangle\right]\ge 0,
$$
we have $J^s(u^*)\le \langle P_2x, x\rangle$, and thus $X^*$ is stable and $u^*$ is admissible .

Passing to the limit $s\to \infty$ in \eqref{dual formula}, we have
\begin{equation}
J^s(u^*)=\langle P_2x, x\rangle.
\end{equation}

Finally, the last formula implies the uniqueness of the positive solution to Algebraic Riccati equation~\eqref{Alg Riccati 2}.
\end{proof}

 \end{document}